 \newtheorem{theorem}{Theorem}[section]
 \newtheorem{remarks}[theorem]{Remarks}
 \newtheorem{pro}[theorem]{Proposition}
\newcommand{\beq}{\begin{equation}}
\newcommand{\eeq}{\end{equation}}
\newcommand{\ben}{\begin{eqnarray}}
\newcommand{\een}{\end{eqnarray}}
\newcommand{\beno}{\begin{eqnarray*}}
\newcommand{\eeno}{\end{eqnarray*}}
\title{ Sufficient conditions on Liouville type theorems for the 3D steady Navier-Stokes equations
}
\author{G. Seregin\footnote{Oxford University, UK}  \footnote{PDMI, RAS, Russia}, W. Wang\footnote{Dalian University of Technology, China}}
\begin{document}

\maketitle

\begin{abstract}
Our aim is to prove Liouville type theorems for the three dimensional steady-state Navier-Stokes equations provided the velocity field belongs to some Lorentz spaces. The corresponding statement contains  several known results as a particular case.
\end{abstract}

{\small {\bf Keywords:} Liouville theorem, Navier-Stokes equations, Lorentz spaces}

\setcounter{equation}{0}
\section{Introduction}

The classical Liouville type problem is to describe all bounded solutions to the three dimensional steady-state Navier-Stokes equations
\begin{equation} \label{eq:SNS}%(\rm SNS)\,\,
-\Delta u+u\cdot \nabla u=-\nabla{p},\qquad{\rm div}\, u=0
\end{equation}
in the entire space $\mathbb R^3$. This is still an open problem.

Another Liouville type problem is to show that all solutions to system (\ref{eq:SNS})
belonging to the space ${\stackrel{\circ}J}{^1_2}$, which is the closure of the set of all smooth divergence free compactly supported functions, denoted by $C^\infty_{0,0}(\mathbb R^3)$,
with respect to the semi-norm
$$\|\nabla u\|_{L_2(\mathbb R^3)}=\Big(\int\limits_{\mathbb R^3}|\nabla u|^2dx\Big)^\frac 12,$$
are identically equal to zero. This problem is related to the name of J. Leray and, to the best of  authors's knowledge, has not been solved yet.

However, there are several sufficient conditions providing that all solutions $u$ to (\ref{eq:SNS}) are equal zero. Let us list the most interesting ones.

We start with Galdi's result. Galdi proved the above Liouville type theorem under the assumption that
$$u\in L^{\frac92}(R^3)$$ in  \cite{Galdi}. Another interesting result belongs to Chae.
 In \cite{Chae}, he showed the condition $$\triangle u\in L^{\frac65}(R^3)$$ is sufficient for  $u\equiv0$ in $\mathbb R^3$. Also, Chae-Wolf gave a logarithmic   improvement of  Galdi's result in \cite{ChaeWolf},
 assuming that
 $$N(u):=\int\limits_{\mathbb R^3} |u|^{\frac92}\{\ln(2+{1}/{|u|})\}^{-1}dx<\infty.$$

 Let us notice two other sufficient conditions. It has been  shown in \cite{Se} that the condition $$u\in BMO^{-1}(\mathbb R^3)$$  implies $u\equiv 0$ as well. Moreover, Kozono, et al., proved in \cite{KTW} that $u\equiv0$ if the vorticity $$w=o(|x|^{-\frac53})$$
 for sufficiently large $|x|$ or
$$\|u\|_{L^{\frac92,\infty}(\mathbb R^3)}\leq \delta D(u)^{1/3}$$ for a small constant $\delta$. More references, we refer to \cite{ChaeWeng,KNSS,Se2} and the references therein.

One of our aims is to relax the restriction imposed on
  the norm $\|u\|_{L^{\frac92,\infty}(\mathbb R^3)}$ %introduced
  in \cite{KTW}. Let us remind the definition of the Lorentz spaces. % and $BMO^{-1}$ spaces.

%\begin{definition}
Suppose that $\Omega\subseteq \mathbb R^n$ and $1\leq p<\infty$, $1\leq \ell \leq \infty$.
It is said that a measurable function $f$ belongs to the Lorentz space $L^{p,\ell}(\Omega)$ if $\|f\|_{L^{p,\ell}(\Omega)}<+\infty$, where
$$
\|f\|_{L^{p,\ell}(\Omega)}:=
\left\{\begin{array}{lll}
\Big(\int\limits_0^{\infty}\sigma^{\ell-1}|\{x\in \Omega:|f(x)|>\sigma\}|^{\frac{\ell}{p}}d\sigma\Big)^{\frac 1{\ell}}\quad
\textrm{if  } \ell<+\infty,\\
\displaystyle\sup_{\sigma>0}\sigma|\{x\in \Omega:|f(x)|>\sigma\}|^{\frac{1}{p}}\quad
\textrm{if  } \ell=+\infty.
\end{array}\right.
$$

%We say that $u\in BMO^{-1}(\mathbb R^3)$ if there
%exists a tensor %$d\in R^{3\times 3}$ and
%$d=(d_{ij}) \in BMO(\mathbb R^3)$ such that $u=div~ d=(d_{ij,j})$. It is known that %for the BMO space, we have
%\begin{equation}
%\Gamma(s)=\sup_{x_0\in \mathbb R^3,R>0}\left(\frac{1}{|B(x_0,R)|}\int\limits_{B(x_0,R)}|d-d_{x_0,R}|^sdx\right)^{\frac1s}<\infty.
%\end{equation}
%for any $s\in [1,\infty)$, where $f_{x_0,R}$ is the mean value  of $f$ over the ball $B(x_0,R)$.

Given $u$, define the following quantity
$$%\begin{equation}
M_{\gamma,q,\ell}(R):= R^{\gamma-\frac3q}\|u\|_{L^{q,\ell}(B_R\setminus B_{R/2})}%M_{\gamma,q,\ell}(R):= %R^{\gamma-\frac3q}\|u\|_{L^{q,\ell}(B(R)\setminus B(R/2))}
%R^{3\gamma-\frac9q}\|{u}\|_{L^{q,\ell}(B(R)\setminus B(R/2))}\|{u}\|_{L^{q,\ell}(B(R))}^2,	
%\end{equation}
$$
where $B(R)=B(0,R)$.

Our result is as follows:
\begin{theorem}\label{thm:NS Morrey}
Let $u$ and $p$ be a smooth solution  to (\ref{eq:SNS}).
%There are two interesting cases:
%\\

(i)~For $q>3$, $3\leq\ell\leq \infty$(or $q=\ell=3$), $\gamma =\frac23$, assume that
\begin{equation}\label{boundedness}
	%\|u\|^2_{L^{q,\ell}(B(R))}
	\liminf_{R\rightarrow\infty}M_{\frac23,q,\ell}(R)<\infty\end{equation}
then
\begin{equation}\label{energy bound}
D(u):=\int\limits_{\mathbb R^3}|\nabla u|^2dx\leq c(q,\ell)%\|u\|^2_{L^{q,\ell}(B(R))
\liminf_{R\rightarrow\infty}M^3_{\frac23,q,\ell}(R).\end{equation}
Moreover, if %there exists a positive constant $\delta=\delta(q,\ell) $ such that %\liminf_{R\rightarrow\infty}M_{\frac23,q,\ell}(R)=0$, or
\begin{equation}\label{CaseI}
	%\|u\|^2_{L^{q,\ell}(B(R))}
	\liminf_{R\rightarrow\infty}M^3_{\frac23,q,\ell}(R)\leq \delta D(u)
\end{equation}
for some $0<\delta<1/c(q,\ell)$, then $u\equiv 0$.
%where
%$$D(u):=\int\limits_{\mathbb R^3}|\nabla u|^2dx;$$
%for a small constant $\delta$;
%\\
%(ii)~for  $2< q<3$,  $1\leq\ell\leq \infty$, and $\gamma>\frac13+\frac1{q}$, suppose that $\liminf_{R\rightarrow\infty}M_{\gamma,q,\ell}(R)<\infty$. Then $u\equiv0$.
%for $q=\ell=3$, $\gamma=\frac23$, bounds (\ref{boundedness}) and (\ref{CaseI}) are still true;
%the identity is valid:
%\begin{equation}\label{CaseII}\liminf_{R\rightarrow\infty}M_{\frac23,3,3}(R)=0;\end{equation}

(ii)~For $12/5< q<3$,  $1\leq\ell\leq \infty$, $\gamma>\frac13+\frac{1}{q}$, suppose that
\begin{equation}\label{CaseII}
\liminf\limits_{R\rightarrow\infty}M_{\gamma,q,\ell}(R)<\infty\end{equation} holds then $u\equiv 0$ as well.
\end{theorem}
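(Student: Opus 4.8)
The plan is to run the standard weighted energy (Caccioppoli) scheme, but to arrange every boundary integral so that the single quantity $M_{\gamma,q,\ell}(R)$ controls it. First I would fix a cutoff $\phi=\phi_R$ with $\phi\equiv1$ on $B_{R/2}$, $\phi\equiv0$ outside $B_R$, and $|\nabla\phi|\lesssim1/R$, and test (\ref{eq:SNS}) with $u\phi^2$. Using $\operatorname{div}u=0$, integration by parts, and the fact that $u\cdot\nabla\phi^2$ has vanishing integral (so $p$ may be replaced by $p-\bar p$, with $\bar p$ the mean over the annulus $A_R:=B_R\setminus B_{R/2}$), this gives
\begin{equation*}
\int_{\mathbb R^3}|\nabla u|^2\phi^2\,dx=-2\int\phi\,\nabla u:(u\otimes\nabla\phi)\,dx+\int|u|^2\,(u\cdot\nabla\phi)\,\phi\,dx+2\int(p-\bar p)\,\phi\,(u\cdot\nabla\phi)\,dx .
\end{equation*}
After a Young inequality absorbing the first term, I obtain the Caccioppoli bound
\begin{equation*}
\int_{B_{R/2}}|\nabla u|^2\,dx\le C\Big(\frac{1}{R^2}\int_{A_R}|u|^2\,dx+\frac1R\int_{A_R}|u|^3\,dx+\frac1R\int_{A_R}|p-\bar p|\,|u|\,dx\Big).
\end{equation*}

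The next step is to estimate the three annular integrals purely through $M_{\gamma,q,\ell}(R)$, using the Lorentz--H\"older identity $\||u|^k\|_{L^{q/k,\ell/k}}=\|u\|_{L^{q,\ell}}^k$ together with $\|\mathbf 1_{A_R}\|_{L^{s,m}}\simeq|A_R|^{1/s}\simeq R^{3/s}$. A direct count yields $\frac1{R^2}\int_{A_R}|u|^2\lesssim R^{1-2\gamma}M^2$ and $\frac1R\int_{A_R}|u|^3\lesssim R^{2-3\gamma}M^3$. For part (i), $\gamma=\tfrac23$ makes the cubic term exactly $M^3$ while the quadratic term carries an extra factor $R^{-1/3}\to0$; for part (ii), $\gamma>\tfrac13+\tfrac1q>\tfrac23$ forces both exponents $1-2\gamma$ and $2-3\gamma$ to be strictly negative, so both terms tend to $0$.

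The heart of the matter, and the step I expect to be the main obstacle, is the pressure integral. Since $-\Delta p=\partial_i\partial_j(u_iu_j)$, I plan a local decomposition $p=p^1+p^2$ on a slightly enlarged ball, with $p^1$ the localized second Riesz transform of $u\otimes u$ and $p^2$ harmonic; Calder\'on--Zygmund theory in Lorentz spaces gives $\|p^1\|_{L^{q/2,\ell/2}}\lesssim\|u\|_{L^{q,\ell}}^2$, while the harmonic part is treated by interior mean-value estimates and the freedom to subtract $\bar p$, so it is of lower order. For $q\ge3$ (part (i)) the exponent $q/2$ pairs with $u$ against a \emph{nonnegative} volume power, and the pressure term inherits the same scaling $R^{2-3\gamma}M^3=M^3$ as the cubic term, requiring no a priori global integrability. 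For $q<3$ (part (ii)) this fails: the conjugate exponent of $q/2$ is $q/(q-2)$, so the estimate needs $u\in L^{q/(q-2)}$, and $q/(q-2)\le6$ (the Sobolev exponent) exactly when $q\ge12/5$. This is precisely where the hypothesis $q>12/5$ enters: I would first use the finiteness of the Dirichlet energy to place $u$ in $L^6$, and then, since $\|\nabla u\|_{L_2(A_R)}\to0$ as $R\to\infty$ when $D(u)<\infty$, the Sobolev inequality on far-out annuli drives $\frac1R\int_{A_R}|p-\bar p|\,|u|$ to zero along the sequence.

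Finally I would pass to the limit along a sequence $R_k\to\infty$ realizing $\liminf_RM_{\gamma,q,\ell}(R)$, noting that the left-hand side of the Caccioppoli bound increases to $D(u)$. In part (i) the right-hand side stays bounded by $c(q,\ell)M^3(R_k)+o(1)$, giving the energy bound $D(u)\le c(q,\ell)\liminf_RM^3_{\frac23,q,\ell}(R)$; combining with the smallness hypothesis (\ref{CaseI}) produces $D(u)\le c\delta\,D(u)$ with $c\delta<1$, forcing $D(u)=0$, hence $\nabla u\equiv0$ and, by the scaling contradiction below, $u\equiv0$. In part (ii) the right-hand side tends to $0$, so $D(u)=0$ directly and $u$ is constant; a nonzero constant $c$ would give $M_{\gamma,q,\ell}(R)\simeq|c|\,R^{\gamma}\to\infty$, contradicting (\ref{CaseII}), whence $u\equiv0$.
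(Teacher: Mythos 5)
Your argument for part (i) is broadly workable, but part (ii) contains two genuine gaps, each fatal on its own. First, the cubic term: your ``direct count'' $\frac1R\int_{A_R}|u|^3\,dx\lesssim R^{2-3\gamma}M^3$ is a H\"older/O'Neil estimate pairing $|u|^3\in L^{q/3,\ell/3}(A_R)$ against a power of $|A_R|$, and this requires $q\ge 3$. For $12/5<q<3$ the inequality $\int_{A_R}|u|^3\,dx\lesssim\|u\|^3_{L^{q,\ell}(A_R)}|A_R|^{1-3/q}$ is simply false --- the left-hand side need not even be finite when $u\in L^{q,\ell}(A_R)$ (consider $|u|\sim|x-x_0|^{-\alpha}$ near a point of the annulus with $1<\alpha<3/q$). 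This is not a technicality: it is the entire difficulty of part (ii), and the second inequality of Proposition \ref{lem:estimate of LQ} exists precisely to circumvent it. There one sets $\bar u=u-[u]_{B(r)\setminus B(2r/3)}$, splits the cubic integrand as $|u|\,|\bar u|^{1-\delta}|\bar u|^{\delta}|u+[u]_{B(r)\setminus B(2r/3)}|$, estimates the factor $|\bar u|^{\delta}$ in $L_{6/\delta}$ by Gagliardo--Nirenberg--Sobolev on the annulus, and absorbs the resulting factor $\|\nabla u\|^{\delta}_{L_2}$, which enters with power $\delta<2$, into the left-hand side by Young's inequality and iteration; the constraints $q>6(3-\delta)/(6-\delta)$ (hence $q>12/5$) and $\gamma>\frac13+\frac1q$ are exactly what make the surviving power of $R$ negative. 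Second, your treatment of the pressure in part (ii) is circular: you invoke the finiteness of $D(u)$ (to place $u$ in $L^6$ and to get $\|\nabla u\|_{L_2(A_R)}\to0$), but in part (ii) finiteness of $D(u)$ is not a hypothesis and is not available a priori --- it is precisely the conclusion that the Caccioppoli estimate is supposed to deliver, and that estimate cannot be closed without the pressure bound. You assume what you must prove.

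Both problems are symptoms of a structural difference from the paper: the paper never estimates the pressure at all. Instead of testing with $u\phi^2$, it tests (\ref{eq:SNS}) with $\phi u-w$, where $w$ is a Bogovskii-type correction solving ${\rm div}\,w=\nabla\phi\cdot u$ with zero boundary values and $\|\nabla w\|_{L^{q,\ell}}\le C\|\nabla\phi\cdot u\|_{L^{q,\ell}}$ (Galdi's Theorem III.3.4 plus Marcinkiewicz interpolation); since this test field is divergence free, the pressure integral vanishes identically, and the term you yourself call ``the heart of the matter'' never appears. I would also point out that even in part (i), where the scaling of your pressure term is right, your sketch is incomplete: in the local decomposition $p=p^1+p^2$ the harmonic part $p^2$ cannot be dismissed by ``interior mean-value estimates and the freedom to subtract $\bar p$'', because interior estimates control $p^2$ on $A_R$ only in terms of $p^2$ (hence of $p$) on a strictly larger set, about which you have no information; making this rigorous requires either a duality (Ne\v{c}as/Bogovskii) estimate for $p-\bar p$ obtained from $\nabla p=\Delta u-{\rm div}(u\otimes u)$, or an iteration over scales as in Chae--Wolf. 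If you adopt the paper's divergence-free test function, all of that machinery becomes unnecessary, and both parts reduce to the two Caccioppoli inequalities of Proposition \ref{lem:estimate of LQ}.
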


\begin{remarks}

\noindent
(i)~Letting $q=\ell=\frac 92$ and assuming that $u\in L_\frac 92(\mathbb R^3)$, we observe that $M_{\frac 23, \frac 92,\frac 92}(R)\to 0$ as $R\to\infty$. So, Galdi's result follows from Theorem \ref{thm:NS Morrey}.

\noindent
(ii)  For  $q=\frac92$ and $\ell=\infty$, condition (\ref{CaseI}) can be regarded as  a generalisation  a result proved by Kozono-Terasawa-Wakasugi in \cite{KTW}.

\noindent
(iii) If we let %Write the space $ L^{\frac92}log(R^3)$ as
$N(u)=\infty$,
%$$
%N(u):=\int\limits_{\mathbb R^3} |u|^{\frac92}\{\ln(2+\frac{1}{|u|})\}^{-1}dx,
%$$
then $N(v)=\infty$ for the function $ v=|x|^{-\frac23}[\ln\ln(|x|+e)]^{-\nu}%\not\in L^{\frac92}log(R^3)
$ with  $0<\nu\leq \frac29$. However, if we assume that $$|u|\leq \frac{C}{|x|^{\frac23}[\ln\ln(|x|+e)]^{\nu}}$$ for the same $\nu$, we can easily check the following fact
$$
\|u\|_{L^{\frac92,\infty}(B(R)\setminus B(R/2))}\rightarrow0$$ as $R\rightarrow\infty$.
The latter shows  that statement (i) of Theorem \ref{thm:NS Morrey} does not follow from %{\bf Case I} of the theorem, which are not included in
the result of Chae-Wolf \cite{ChaeWolf}.

\noindent
(iii) The second statement of the theorem is an improvement of one of the results in \cite{Se3}, see Theorem 1.8, where it is assume that $\gamma >\frac {4q-3}{6q-3}$.

\end{remarks}

\setcounter{equation}{0}\section{Caccioppoli Type Inequalities }
We start with an auxiliary lemma about Caccioppoli type inequality for the system
(\ref{eq:SNS}).

\begin{pro}\label{lem:estimate of LQ}
Let $u$ and $p$ be the smooth solution of (\ref{eq:SNS}). Then the following Caccioppoli type inequalities hold: %

%\noindent
%(i)
if $q>3$ and $1\leq \ell\leq \infty$, then
\begin{equation}
\label{eq:increasing estimate-Lorentz}
\int\limits_{B(R/2)}|\nabla u|^2dx \leq CR^{-2}\int\limits_{B(R)\setminus B(R/2)}|{u}|^2dx+CR^{2-\frac9q}\|{u}\|^3_{L^{q,\ell}(B(R)\setminus B(R/2))};%\|{u}\|_{L^{q,\ell}(B(R))}^2
%;
\end{equation}

%\noindent
%(ii) if $q=\ell=3$, then
%\begin{equation}
%\label{eq:increasing estimate-Lorentz'}
%\int\limits_{B(R/2)}|\nabla u|^2dx\leq CR^{-2}\int\limits_{B(R)\setminus B(R/2)}|{u}|^2dx+CR^{-1}\|{u}\|^3_{L_{3}(B(R)\setminus B(R/2))}%\int\limits_{B(R)\setminus B(R/2)}|{u}|^3dx
%;
%\end{equation}

\noindent
 if $0<\delta\leq 1$, $3>q>\frac{ 6(3-\delta)}{6-\delta}$, %and $1\leq \ell\leq \infty$,
then
$$\int\limits_{B(R/2)}|\nabla u|^2dx\leq \frac{C}{R^2}\int\limits_{B(R)\setminus B(R/2)}|{u}|^2dx+$$
\begin{equation}
\label{eq:increasing estimate-Lorentz2}
+C(\delta)\Big(\|{u}\|_{L^{q,\infty}2(B(R)\setminus B(R/2))}^{3-\delta}R^{2-\frac{9-3\delta}{q}-\frac \delta2}\Big)^\frac 2{2-\delta}.
\end{equation}
\end{pro}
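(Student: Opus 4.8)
The plan is to derive both inequalities from the local energy identity obtained by testing \eqref{eq:SNS} against $\varphi^2u$, where $\varphi$ is a smooth cut-off with $\varphi\equiv1$ on $B(R/2)$, $\mathrm{supp}\,\varphi\subset B(R)$ and $|\nabla\varphi|\le C/R$. Integrating by parts and using $\mathrm{div}\,u=0$ (so that $\int\varphi\,u\cdot\nabla\varphi\,dx=\tfrac12\int u\cdot\nabla(\varphi^2)\,dx=0$, which lets us insert any constant $c$ in the pressure term), one gets, with summation over $i,j$,
\beq
\int\varphi^2|\nabla u|^2\,dx=-2\int\varphi\,u_j\,\partial_iu_j\,\partial_i\varphi\,dx+\int|u|^2\varphi\,u\cdot\nabla\varphi\,dx+2\int(p-c)\,\varphi\,u\cdot\nabla\varphi\,dx.
\eeq
The first term is absorbed by Young's inequality into $\tfrac12\int\varphi^2|\nabla u|^2$, leaving the harmless contribution $CR^{-2}\int_{A}|u|^2$, where $A=B(R)\setminus B(R/2)$ is the support of $\nabla\varphi$. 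Everything thus reduces to estimating the two cubic quantities $\tfrac CR\int_A\varphi|u|^3$ and $\tfrac CR\int_A\varphi|p-c||u|$.

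To handle the pressure I would decompose $p=p^1+p^2$ on $B(2R)$: with $\eta$ a cut-off equal to $1$ on $B(2R)$ set $p^1=R_iR_j(\eta u_iu_j)$ (Riesz transforms), so that $p^2=p-p^1$ is harmonic on $B(2R)\supset\overline A$. Boundedness of singular integrals on Lorentz spaces gives $\|p^1\|_{L^{q/2,\ell/2}}\le C\|u\|^2_{L^{q,\ell}}$ (here $q/2>1$ and the restriction on $\ell$ guarantee $\ell/2\ge1$), while for $p^2$ I take $c=[p^2]_A$ and use interior gradient estimates for harmonic functions on $\overline A$, which sits at distance $\ge R/2$ from $\partial B(2R)$. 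In effect this yields a local bound $\|p-c\|_{L^{q/2,\ell/2}(A)}\le C\|u\|^2_{L^{q,\ell}(A')}$ on a slightly enlarged annulus. Arranging the harmonic remainder so that the interior estimate produces only velocity-controlled quantities, rather than an uncontrolled mean oscillation of the genuine pressure, is the main technical obstacle; this is precisely where the structure of \eqref{eq:SNS} is used.

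Granting the pressure bound, \eqref{eq:increasing estimate-Lorentz} ($q>3$) follows from Hölder's inequality in Lorentz spaces: both $|u|^3$ and $|p-c||u|$ lie in $L^{q/3,\ell/3}$ with norm $\le C\|u\|^3_{L^{q,\ell}}$, and pairing with $\|1\|_{L^{(q/3)',(\ell/3)'}(A)}\sim R^{3-9/q}$ turns each cubic quantity into $CR^{2-9/q}\|u\|^3_{L^{q,\ell}(A)}$. The assumption $q>3$ is needed exactly so that the conjugate exponent $(q/3)'$ is finite, and the endpoint $q=\ell=3$ reduces to the trivial identity $\int_A|u|^3=\|u\|^3_{L^{3,3}(A)}$.

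For \eqref{eq:increasing estimate-Lorentz2} ($q<3$) this Hölder step fails, and the idea is to spend part of the cubic integrand on the energy via $\dot H^1\hookrightarrow L^6$. Writing $\varphi|u|^3=|u|^{3-\delta}(\varphi^{1/\delta}|u|)^\delta$ and applying Hölder with the factors in $L^{q/(3-\delta),\infty}$, $L^{6/\delta}$ and $L^{r}(A)$, where $\tfrac1r=1-\tfrac{3-\delta}{q}-\tfrac{\delta}{6}$, gives
\beq
\frac CR\int_A\varphi|u|^3\,dx\le C\,\|u\|^{3-\delta}_{L^{q,\infty}(A)}\,\|\varphi^{1/\delta}u\|^{\delta}_{L^6}\,R^{\,2-\frac{9-3\delta}{q}-\frac{\delta}{2}},
\eeq
and the hypothesis $q>\tfrac{6(3-\delta)}{6-\delta}$ is exactly the requirement $\tfrac1r>0$. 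The crucial device is the exponent $1/\delta$ on $\varphi$: since $0<\delta\le1$ one has $\varphi^{2/\delta}\le\varphi^2$, so Sobolev gives $\|\varphi^{1/\delta}u\|_{L^6}\le C(\int\varphi^2|\nabla u|^2)^{1/2}+CR^{-1}(\int_A|u|^2)^{1/2}$, whose gradient part is again the left-hand side. The pressure term is identical after the split $\varphi|p-c||u|=|p-c|\,|u|^{1-\delta}(\varphi^{1/\delta}|u|)^\delta$, which produces the same powers. Finally Young's inequality with exponents $\tfrac2\delta$ and $\tfrac{2}{2-\delta}$ separates the product $\|\varphi^{1/\delta}u\|^\delta_{L^6}\cdot\big(\|u\|^{3-\delta}_{L^{q,\infty}(A)}R^{2-\frac{9-3\delta}{q}-\frac{\delta}{2}}\big)$ into a small multiple of $\int\varphi^2|\nabla u|^2$ (absorbed), the term $CR^{-2}\int_A|u|^2$, and $C(\delta)\big(\|u\|^{3-\delta}_{L^{q,\infty}(A)}R^{2-\frac{9-3\delta}{q}-\frac{\delta}{2}}\big)^{\frac{2}{2-\delta}}$, which is precisely \eqref{eq:increasing estimate-Lorentz2}.
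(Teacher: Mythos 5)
Your overall skeleton (cut-off energy estimate, Lorentz--H\"older for $q>3$, and the $|u|^{3-\delta}\cdot|u|^{\delta}$ splitting with Sobolev embedding and Young's inequality for $q<3$) matches the paper's treatment of the convective term quite closely, and that part of your argument is sound. But your proof has a genuine gap, and it sits exactly where you flag it yourself: the pressure. The paper never estimates the pressure at all. Instead of testing with $\varphi^2u$, it tests with $\phi u-w$, where $w$ is a Bogovskii-type correction solving $\operatorname{div}w=\nabla\phi\cdot u$ in the annulus, vanishing on its boundary, with $\|\nabla w\|_{L^{q,\ell}}\le C\|\nabla\phi\cdot u\|_{L^{q,\ell}}$ (Galdi's Theorem III.3.4 plus Marcinkiewicz interpolation); the solvability condition $\int\nabla\phi\cdot u\,dx=0$ holds because $\phi$ is radial and $u$ is divergence free. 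Since $\operatorname{div}(\phi u-w)=0$, the pressure integral vanishes identically, and the extra term $I_4=\int\nabla u:(u\otimes w)\,dx$ it creates is estimated exactly like the convective term. Your substitute --- the decomposition $p=R_iR_j(\eta u_iu_j)+p^2$ with $p^2$ harmonic --- does not close, for two concrete reasons.

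First, the Calder\'on--Zygmund bound for $p^1=R_iR_j(\eta u_iu_j)$ controls it by $\|u\|^2_{L^{q,\ell}(\operatorname{supp}\eta)}$, and $\operatorname{supp}\eta$ is a full ball containing $B(2R)$, not the annulus $B(R)\setminus B(R/2)$. A Caccioppoli inequality whose right-hand side involves $\|u\|_{L^{q,\ell}(B(cR))}$ is strictly weaker than \eqref{eq:increasing estimate-Lorentz}--\eqref{eq:increasing estimate-Lorentz2} and would ruin the applications in Theorem \ref{thm:NS Morrey}: e.g.\ the deduction of Galdi's result needs $R^{2/3-3/q}\|u\|_{L^{q,\ell}}$ on \emph{annuli} to tend to zero, which is false for full-ball norms when $u\in L^{9/2}(\mathbb R^3)$. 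If you shrink $\eta$ to a neighbourhood of the annulus to avoid this, you run into the second problem: $p^2$ is then harmonic only on an annular region, and its oscillation there is \emph{not} controlled by $\|u\|_{L^{q,\ell}}$ of the annulus --- interior estimates for harmonic functions bound $\nabla p^2$ by averages of $p^2$ itself, i.e.\ by the very pressure oscillation you are trying to estimate. You acknowledge this (``the main technical obstacle \dots this is precisely where the structure of \eqref{eq:SNS} is used'') but never resolve it, and resolving it is the whole point. (One \emph{can} get a purely local bound from $\nabla p=\Delta u-u\cdot\nabla u$ via negative-norm estimates, but that produces an additional $\|\nabla u\|_{L^{q/2}}$ term requiring its own absorption argument, and in any case is not what you wrote.) So, as written, the proof is incomplete; the missing idea is precisely the paper's divergence-free test function, which makes the pressure disappear rather than requiring it to be estimated.
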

\begin{proof} Given $R>0$, fix numbers $\varrho$ and $r$ so that $3R/4\leq\varrho<r\leq R$.
	Now, let us pick up a cut-off function  $\phi(x)\in C_0^\infty(B(R))$ satisfying  the following conditions: $0\leq \phi\leq 1$, $\phi(x)=1$ if $x\in B(\varrho)$, $\phi(x)=0$ if $x\in B(r)^c$, and $|\nabla \phi(x)|\leq c/(r-\varrho)$.
	
	We also may assume that function $\phi(x)=\eta(|x|)$, i.e., it depends  on the distance to the origin only. In this case, it is easy to check that
	$$\int\limits_{B(r)\setminus B(2r/3)}\nabla \phi\cdot udx=0.
	$$
	Then, by Theorem III 3.4 in \cite{Galdi} and by scaling, for any $1< s<\infty$, there exist a constant $c_0(s)$ and a function $
w\in W^1_s(B(r))$ such that
$div~w=\nabla\phi\cdot {u}$ in $B(r)$, $w=0$ on $\partial B(r)\cup \partial B(2r/3)$, and
$$
\int\limits_{B(r)\setminus B(2r/3)}|\nabla w|^sdx\leq C_0(s)\int\limits_{B(r)\setminus B(2r/3)}|\nabla\phi\cdot { u}|^sdx.
$$
The function $w$ is extended by zero outside the set $B(r)\setminus B(2r/3)$.
Moreover, it is actually smooth as $u$ is smooth.

According to the general Marcinkiewicz interpolation theorem, we find

$$
\|\nabla w\|_{L^{q,\ell}(B(r)\setminus B(2r/3))}
\leq C_0(q) \|\nabla\phi\cdot { u}\|_{L^{q,\ell}(B(r)\setminus B(2r/3))}.
$$
for any $1<q<\infty$ and any $1\leq \ell\leq \infty.$

Multiplication of  both sides of the equation (\ref{eq:SNS}) by $(\phi {u}-w)$ and integration  by parts give:

$$
\int\limits_{B(r)}\phi|\nabla u|^2dx=$$$$ =-\int\limits_{B(r)}\nabla u : (\nabla \phi\otimes{u})  dx+\int\limits_{B(r)}\nabla w:\nabla u  dx
-\int\limits_{B(r)}\nabla u : (\phi {u}\otimes u) dx+$$$$+\int\limits_{B(r)}\nabla u :(u\otimes w) dx=
 I_1+\cdots+I_4.
$$

Obviously, since $R\geq r>\varrho\geq 3R/4>R/2$,
$$
|I_1|\leq C\frac{1}{r-\rho}\Big(\int\limits_{B(r)}|\nabla u|^2dx\Big)^{\frac12}\Big(\int\limits_{B(r)\setminus B(\varrho)}| {u}|^2dx\Big)^{\frac12}\leq$$
$$C\frac{1}{r-\rho}\Big(\int\limits_{B(r)}|\nabla u|^2dx\Big)^{\frac12}\Big(\int\limits_{B(R)\setminus B(R/2)}| {u}|^2dx\Big)^{\frac12}$$
and
$$
|I_2| \leq C\Big(\int\limits_{B(r)}|\nabla u|^2dx\Big)^{\frac12}\|\nabla w\|_{L_{2}(B(r)\setminus B(2r/3))}\leq $$$$\leq C \frac{1}{r-\rho}\|\nabla u\|_{L_2(B(r))} \|{u}\|_{L_2(B(R)\setminus B(R/2))}.
$$

 Now, our aim is to prove inequality (\ref{eq:increasing estimate-Lorentz}). To this end, assuming that $q>3$ and $\ell\geq 3$, let us estimate $I_3$, using integration by parts and H\"older inequality in Lorentz spaces. Indeed,
$$
|I_3|= \frac12\Big|\int\limits_{B(r)\setminus B(\varrho)}{u}\cdot\nabla\phi |{u}|^2dx\Big|\leq$$$$\leq  C\|u\cdot\nabla \phi \|_{L^{\frac q{q-2},\frac \ell {\ell-2}}(B(r)\setminus B(\varrho))}
\||u|^2\|_{L^{\frac q2,\frac \ell2}(B(r)\setminus B(\varrho))}\leq $$$$\leq C\frac 1{r-\varrho}\|u\|^3_{L^{q,\ell}(B(R)\setminus B(R/2))}\|1\|_{L^{\frac q{q-3},\frac \ell{\ell-3}}(B(R)\setminus B(R/2))}\leq $$$$\leq C\frac{1}{r-\rho}\|{u}\|_{L^{q,\ell}(B(R)\setminus B(R/2))}^3R^{3-\frac9q}.
$$
The quantity $I_4$ is evaluated similarly, if we use the estimate for the gradient of $w$ with suitable exponents:
$$
|I_4|= |\int\limits_{B(r)\setminus B(2r/3)}\nabla w : u\otimes u dx|\leq
$$$$
\leq  C\|\nabla w \|_{L^{\frac q{q-2},\frac \ell {\ell-2}}(B(r)\setminus B(2r/3))}
\||u|^2\|_{L^{ \frac q2, \frac \ell2}(B(r)\setminus B(2r/3)}\leq
$$
$$
\leq  C\|u\cdot\nabla \phi \|_{L^{\frac q{q-2},\frac \ell {\ell-2}}(B(r)\setminus B(2r/3))}
\|u\|^2_{L^{ q, \ell}(B(r)\setminus B(2r/3))}\leq
$$
$$\leq \frac{C}{\tau-\rho}\|{u}\|^3_{L^{q,\ell}(B(R)\setminus B(R/2))}R^{3-\frac9q}%\|u\|^2_{L^{ q, \ell}(B(R))}
.$$

Hence, we get
%\ben\label{eq:increasing estimate-Lorentz2}
%\int_{B_\rho}|\nabla u|^2dx&\leq& \frac12 \left(\int_{B_\tau}|\nabla u|^2dx\right)+\left[\frac{C}{(\tau-\rho)^2}+\left(\frac{R^{\frac{3}{2q}}}{\tau-\rho}\right)^{\frac{4q}{2q-3}}\right]\left(\int_{B_\tau}|{u}|^2dx\right)\\
%&&
%+C \left[\frac{1}{\tau-\rho}\|{u}\|_{L^{q,\infty}(B_R)} \|{u}\|_{L^2(B_R)}^{2-\frac{3}{q}}\right]^{\frac{2q}{2q-3}}
%\een
%and the standard iterative implies that
$$
\int\limits_{B(\rho)}|\nabla u|^2dx\leq\frac12\int\limits_{B(r)}|\nabla u|^2dx+\frac{C}{(r-\rho)^2}\Big(\int\limits_{B(R)\setminus B(R/2)}|{u}|^2dx\Big)+$$
%\frac{C}{\tau-\rho}R^{3-\frac9q}\|{u}\|_{L^{q,\ell}(B_R\setminus B_{R/2})}^3
$$+\frac{C}{r-\rho}\|{u}\|^3_{L^{q,\ell}(B(R)\setminus B(R/2))}R^{3-\frac9q}
%\|u\|^2_{L^{ q, \ell}(B(R))}
,$$
which yields the inequality (\ref{eq:increasing estimate-Lorentz}) by the standard iteration.

%Statement (ii)

%Inequality (\ref{eq:increasing estimate-Lorentz'})
%is proved in the same way.

Now, let us prove the second inequality of the proposition. To this end, we introduce $\bar u=u-[u]_{B(r)\setminus B(2r/3)}$, where $[u]_{\Omega}$ is the mean value of $u$ over a  domain $\Omega$. Applying integration by parts, we find
$$I_3=-\frac 12\int\limits_{B(r)}\phi u\cdot\nabla (|u|^2)dx=-\frac 12\int\limits_{B(r)}\phi u\cdot\nabla (|u|^2-|[u]_{B(r)\setminus B(2r/3)}|^2)dx=
$$
$$=\frac 12\int\limits_{B(r)\setminus B(\varrho)}(u\cdot\nabla \phi)(|u|^2-|[u]_{B(r)\setminus B(2r/3)}|^2)dx$$
and, since $2r/3<3R/4\leq \varrho$,
$$|I_3|\leq \frac C{r-\varrho}\int\limits_{B(r)\setminus
B(2r/3)}|u||\bar u||u+[u]_{B(r)\setminus B(2r/3)}|dx.$$
Under our assumptions on numbers $q$ and $\delta$,
 the following is true
$$0<\beta =1-\frac {3-\delta}q -\frac \delta 6<1.
$$
So, applying the H\"older inequality for Lorentz spaces, we show
$$|I_3|\leq  \frac C{r-\varrho}\int\limits_{B(r)\setminus
B(2r/3)}|u||\bar u|^{1-\delta}|\bar u|^\delta|u+[u]_{B(r)\setminus B(2r/3)}|dx\leq $$
$$\leq
\frac C{r-\varrho}\|u\|_{L^{q,\infty}(B(r)\setminus B(2r/3))}\||\bar u|^{(1-\delta)}\|_{L^{\frac q{1-\delta},\infty}(B(r)\setminus B(2r/3))}\||\bar u|^\delta\|_{L_\frac 6\delta(B(r)\setminus B(2r/3))}\times
$$
$$\times\|1\|_{L^{\frac{1}{\beta},\frac 6{6-\delta}}(B(r)\setminus B(2r/3))}\|u+[u]_{B(r)\setminus B(2r/3)}\|_{L^{q,\infty}(B(r)\setminus B(2r/3))}\leq $$
$$\leq
\frac C{r-\varrho}%R^{3\gamma}
\|u\|_{L^{q,\infty}(B(r)\setminus B(R/2))}\|\bar u\|^{1-\delta}_{L^{ q,\infty}(B(r)\setminus B(2r/3))}\|\bar u\|^\delta_{L_6(B(r)\setminus B(2r/3))}\times
$$
$$\times R^{3\beta}%\|1\|_{L^{\frac{1}{\gamma},\frac 65}(B(r)\setminus B(R/2))}
\|u+[u]_{B(r)\setminus B(2r/3)}\|_{L^{q,\infty}(B(r)\setminus B(2r/3))}.$$
By Gagliardo-Nireberg-Sobolev inequality and by the inequality
$$\|[u]_{B(r)\setminus B(2r/3)}\|_{L^{q,\infty}(B(r)\setminus B(2r/3))}\leq c\|u\|_{L^{q,\infty}(B(r)\setminus B(2r/3))},$$
we can transform the estimate of $|I_3|$ to the following final form
$$|I_3|\leq \frac C{r-\varrho}R^{3\beta}\|u\|^{3-\delta}_{L^{q,\infty}(B(r)\setminus B(R/2))}\|\nabla u\|^\delta_{L_2(B(r)\setminus B(2r/3))
}\leq
$$
$$\leq \frac C{r-\varrho}R^{3\beta}\|u\|^{3-\delta}_{L^{q,\infty}(B(r)\setminus B(R/2))}\|\nabla u\|^\delta_{L_2(B(r)\setminus B(R/2))
}\leq
$$
$$\leq \frac19\int\limits_{B(r)\setminus B(R/2)}|\nabla u|^2dx+C(\delta)\Big(\frac {R^{3\beta}}{r-\varrho}\|u\|^{3-\delta}_{L^{q,\infty}(B(r)\setminus B(R/2))}\Big)^\frac 2{2-\delta}.$$

Now, our aim is to evaluate $I_4$. Using similar arguments,  we have
$$
|I_4|= \Big|\int\limits_{B(r)\setminus B(2r/3)}(u\cdot\nabla u) \cdot w dx\Big|=\Big|\int\limits_{B(r)\setminus B(2r/3)}(u\cdot\nabla w) \cdot \bar{u} dx\Big|\leq $$
$$\leq\|\nabla w\|_{L^{q,\infty}(B(r)\setminus B(2r/3))}\|\bar u\|^{1-\delta}_{L^{ q,\infty}(B(r)\setminus B(2r/3))}\|\bar u\|^\delta_{L_6(B(r)\setminus B(2r/3))}\times
$$
$$\times R^{3\beta}%\|1\|_{L^{\frac{1}{\gamma},\frac 65}(B(r)\setminus B(R/2))}
\|u%+[u]_{B(r)\setminus B(2r/3)}
\|_{L^{q,\infty}(B(r)\setminus B(2r/3))}.$$
Taking into account the bound for the gradient of $w$, we arrive at the same type estimate as in the case of $I_3$

Consequently, combining bounds of $I_1,\cdots,I_4$ we get
$$
\int\limits_{B(\varrho)}|\nabla u|^2dx\leq\frac12\int\limits_{B(r)}|\nabla u|^2dx+\frac{C}{(r-\rho)^2}\Big(\int\limits_{B(R)\setminus B(R/2)}|{u}|^2dx\Big)$$$$+C(\delta)\Big(\frac{R^{3\beta
}}{(r-\varrho)}\|{u}\|^{3-\delta}_{L^{q,\infty}(B(R)\setminus B(R/2))}\Big)^{\frac{2}{2-\delta}}
$$
for any $\frac{3}{4}R\leq \varrho<\tau\leq R$.
Hence, the inequality (\ref{eq:increasing estimate-Lorentz2}) follows. %by Lemma \ref{lem:Giaquinta}.
\end{proof}

\setcounter{equation}{0}
\section{Proof of Theorem \ref{thm:NS Morrey}}

%Let us prove Theorem \ref{thm:NS Morrey}.
We start with a proof of the statement (i).
It is easy to check that, for
$2<q<6$, the following estimate is valid:
$$
R^{-2}\Big(\int\limits_{B(R)\setminus B(R/2)}|{u}|^2dx\Big)\leq C(q) R^{1-\frac6q}\|{u}\|_{L^{q,\infty}(B(R)\setminus B(R/2))}^2%\rightarrow 0,
\leq
$$
%$$
%\leq C(q)R^{1-\frac6q}(R^{2-\frac 9q}\|u\|^2_{L^{q,\ell}(B(R))}\|{u}\|_{L^{q,\ell}(B(R)\setminus B(R/2))})^\frac 23R^{(\frac 9q-2)\frac 23}\leq$$
$$
\leq C(q)R^{-\frac 13}M^2_{\frac 23,q,\ell}(R)
.$$
Taking into account condition (\ref{boundedness}), we find (\ref{energy bound}) and then (\ref{CaseI}).

Now, our goal is to prove the statement (ii). Applying the H\"older inequality to the first term on the right hand side in (\ref{eq:increasing estimate-Lorentz2}), we find the following:
$$\int\limits_{B(R/2)}|\nabla u|^2dx\leq {C}{R^{\frac 13-\frac 2q}}M^2_{\frac 13+\frac 1q,q,\infty}(R)+C(\delta)\Big(\|{u}\|_{L^{q,\infty}2(B(R)\setminus B(R/2))}^{3-\delta}R^{3\beta -1}\Big)^\frac 2{2-\delta}\leq $$
\begin{equation}
\label{eq:increasing estimate-Lorentz2}
\leq {C}{R^{\frac 13-\frac 2q}}M^2_{\frac 13+\frac 1q,q,\infty}(R)+C\Big(M^{3-\delta}_{\gamma,q,\infty}(R)R^{3\beta -1-(\gamma-\frac 3q)(3-\delta)}\Big)^\frac 2{2-\delta}=
\end{equation}
$$= {C}{R^{\frac 13-\frac 2q}}M^2_{\frac 13+\frac 1q,q,\infty}(R)+C\Big(M^{3-\delta}_{\gamma,q,\infty}(R)R^{2-\frac \delta 2 -\gamma(3-\delta)}\Big)^\frac 2{2-\delta}.$$

Now, fix $q\in ]12/5,3[$. Then we can find $q_1$ having the following properties:
$$q>q_1>\frac {12}5,\qquad \gamma>\frac 13+\frac 1{q_1}>\frac 13+\frac 1q.$$
Given $q_1$, there exists a number $\delta\in ]0,1[$ such that
$$q_1=\frac {6(3-\delta)}{6-\delta}<q.$$
It remains to notice that
$$a:=2-\frac \delta 2-\gamma (3-\delta)=2-\frac {3(3-q_1)}{6-q_1}-\gamma\Big(3-\frac {6(3-q_1)}{6-q_1}\Big)=$$
$$=\frac {3+q_1-3q_1\gamma}{6-q_1}.$$
But $\gamma >\frac 13+\frac 1q_1$ and thus
$$a=3q_1\frac {\frac 13+\frac 1q_1-\gamma}{6-q_1}<0
$$
Passing to the limit as $R\to \infty$, we complete the proof of the theorem.

\noindent {\bf Acknowledgments.}
The first author is supported by the grant RFBR 17-01-
00099-a.
W. Wang was supported by NSFC under grant 11671067,
 "the Fundamental Research Funds for the Central Universities" and China Scholarship Council.


\begin{thebibliography}{WWW}

%\bibitem{CPZ}
%B. Carrillo, X. Pan, Q.  Zhang,
%{\it Decay and vanishing of some axially symmetric D-solutions of the Navier-Stokes equations,} arXiv:1801.07420 [math.AP].

\bibitem{Chae} D. Chae, {\it Liouville-type theorem for the forced Euler equations and
the Navier-Stokes equations.} Commun. Math. Phys.326: 37-48 (2014).


\bibitem{ChaeYoneda} D. Chae,  T. Yoneda, {\it On the Liouville theorem for the stationary
Navier-Stokes equations in a critical space,} J. Math. Anal. Appl. 405
(2013), no. 2, 706-710.


\bibitem{ChaeWolf} D. Chae, J. Wolf, {\it On Liouville type theorems for the steady Navier-
Stokes equations in $R^3$,} arXiv:1604.07643.

\bibitem{ChaeWeng} Chae, G., Weng, S.,
{\it Liouville type theorems for the steady axially
symmetric Navier-Stokes and
magnetohydrodynamic equations,}
Discrete And Continuous
Dynamical Systems,
Volume 36, Number 10, 2016 Pp. 5267-5285.

%\bibitem{CJ} H. Choe, B.Jin, {\it Asymptotic properties of axis-symmetric D-solutions of the Navier-Stokes equations.} J. Math. Fluid Mech. 11 (2009), no. 2, 208-232.

\bibitem{Galdi} G. P. Galdi,  An introduction to the mathematical theory of the Navier-
Stokes equations. Steady-state problems. Second edition. Springer
Monographs in Mathematics. Springer, New York, 2011. xiv+1018 pp.


%\bibitem{ESS} L. Escauriaza, G. A. Seregin and V. \v{S}ver\'{a}k,
%{\it $L^{3,\infty}$ solutions to the Navier-Stokes equations and backward uniqueness},
% Russ. Math. Surveys, 58(2003), 211-250.

%\bibitem{GKP2} I. Gallagher, G. Koch and F. Planchon, {\it Blow-up of critical Besov norms at a
%potential Navier-Stokes singularity,}  Comm. Math. Phys., 343 (2016), no. 1, 39-82.


\bibitem{KNSS} Koch, G., Nadirashvili, N., Seregin, G., Sverak, V., {\it Liouville theorems
for the Navier-Stokes equations and applications,} Acta Mathematica,
203 (2009), 83-105.


\bibitem{KTW}  H. Kozono, Y. Terasawab, Y. Wakasugib, {\it A remark on Liouville-type theorems for the stationary Navier¨CStokes equations in three space dimensions,} Journal of Functional Analysis, 272(2017), 804-818.




%\bibitem{KRR}  M. Korobkov, K. Pileckas and R. Russo, {\it The Liouville theorem for the steady-state Navier-Stokes problem for axially symmetric 3D solutions in absence of swirl,} J. Math. Fluid Mech.,
%17 (2015), 287-293.


%
%\bibitem{Sch} Simon Schulz,
%Liouville type theorem for the stationary equations of magneto-hydrodynamics, arXiv:1710.07079.

\bibitem{Se} G. Seregin, {\it Liouville type theorem for stationary Navier-Stokes equations,} Nonlinearity, 29 (2016), 2191-2195.

\bibitem{Se2}G. Seregin, {\it A liouville type theorem for steady-state Navier-Stokes
equations,} arXiv:1611.01563 and J. E. D. P. (2016), Exposé no IX, 5 p.
%<http://jedp.cedram.org/item?id=JEDP_2016____A9_0>

\bibitem{Se3} G. Seregin, {\it Remarks on Liouville type theorems for
steady-state Navier-Stokes equations,} arXiv:1703.10822v1 and Algebra i Analiz, 2018, Vol. 30, no.2., 238-248.



%\bibitem{Weng} S. Weng, {\it Decay Properties of Axially Symmetric D-Solutions to the Steady Navier-Stokes Equations,} J. Math. Fluid Mech.  (2017).
%
%
%\bibitem{WZ1} Wendong Wang and Zhifei Zhang, {\it Regularity of weak solutions for the Navier-Stokes equations in the class $L^\infty({\rm BMO}^{-1})$.} Commun. Contemp. Math. 14 (2012), no. 3, 1250020, 24 pp.
%
%\bibitem{WZ2} Liu Jitao and Wendong Wang, {\it On the Boundary Regularity for the 6D Stationary Navier-Stokes Equations,} arXiv:1309.3158.


 \end{thebibliography}
\end{document}